\newtheorem{theorem}{Theorem}[section]
\newtheorem{lemma}[theorem]{Lemma}
\newtheorem{proposition}[theorem]{Proposition}
\newtheorem{thmz}{Theorem}
\theoremstyle{definition}
\newtheorem{definition}[theorem]{Definition}
\theoremstyle{remark}
\newtheorem{remark}[theorem]{Remark}
\numberwithin{equation}{section}
\newcommand{\cH}{\mathcal{H}}
\newcommand{\B}{\mathbb{B}}
\newcommand{\R}{\mathbb{R}}
\newcommand{\diam}{\operatorname{diam}}
\newcommand{\dist}{\operatorname{dist}}
\newcommand{\Span}{\operatorname{span}}
\newcommand{\aff}{\mathrm{aff}}
\renewcommand{\hat}{\widehat}
\newcommand{\curv}{\operatorname{curv}}
\newcommand{\obeta}{\hat{\beta}}
\newcommand{\res}{\hbox{ {\vrule height .22cm}{\leaders\hrule\hskip.2cm} }} 
\newcommand{\defeq}{\vcentcolon=}
\renewcommand{\dif}{d}
\def\XXint#1#2#3{{\setbox0=\hbox{$#1{#2#3}{\int}$ }
\vcenter{\hbox{$#2#3$ }}\kern-.6\wd0}}
\newcommand{\restr}{\mathbin{\vrule height 1.6ex depth 0pt width
0.13ex \vrule height 0.13ex depth 0pt width 1.3ex}}
\title{Menger curvatures and $C^{1,\alpha}$ rectifiability of measures }
\author{Silvia Ghinassi}
\address{School of Mathematics, Institute for Advanced Study, Princeton, NJ 05840, USA}
\email{ghinassi@math.ias.edu}
\author{Max Goering}
\address{Department of Mathematics, University of Washington, Seattle, WA 98195, USA}
\email{mgoering@uw.edu}
\date{\today}
\thanks{The impetus for the project came while both authors were in residence at the 2018 PCMI program in Harmonic Analysis. The first author was partially supported by Raanan Schul's NSF grant DMS 1361473 and 1763973. The second author was partially supported by Tatiana Toro's NSF grant DMS 1664867 and Hart Smith's NSF grant DMS 1500098. The authors are also grateful to the anonymous referee for their useful and thorough comments throughout the original manuscript.} 
\subjclass[2010]{Primary: 28A75, 28A12. Secondary: 26A16, 28A78, 42B99.}
\keywords{Menger curvature, $C^{1,\alpha}$ rectifiable measures, $C^{1,\alpha}$ rectifiable sets, Jones beta numbers, Jones square functions}
\begin{document}

\begin{abstract}
We further develop the relationship between $\beta$-numbers and discrete curvatures to provide a new proof that under weak density assumptions, finiteness of the pointwise discrete curvature $\curv^{\alpha}_{\mu;2}(x,r)$ at $\mu$-a.e. $x \in \R^{m}$ implies that $\mu$ is $C^{1,\alpha}$ $n$-rectifiable. 
\end{abstract}
\maketitle

\section{Introduction}
In 1990 Peter Jones introduced the $\beta$-numbers as a quantitative tool to provide control of the length of a rectifiable curve and to prove the Analyst's Traveling Salesman Theorem \cite{atst} in the plane. Kate Okikiolu extended the result to one-dimensional objects in $\R^n$ \cite{okikiolu}. 
In order to study the regularity of Ahlfors regular sets and measures of higher dimensions \cite{davidsemmes91, davidsemmes93}, David and Semmes generalized the notion of $\beta$-numbers, see \eqref{e:dbetap}.  This was the beginning of quantitative geometric measure theory and has led to lots of activity around characterizing uniformly rectifiable measures and their connections to the boundedness of a certain class of singular integral operators.

More recently, rectifiable sets and measures have been studied using the quantitative techniques previously used for uniformly rectifiable measures. For instance, $\beta$-numbers can characterize rectifiability of measures, amongst the class of all measures with various density and mass bounds. See for instance \cite{pajot1997conditions, azzamtolsanrect, tolsanrect, naber,badgerschul2017}.

Several other geometric quantities have also proven to be useful in quantifying the regularity of sets and measures. In this paper, we wish to explore how Menger-type curvatures, see Definitions \ref{d:cmc} and \ref{d:mc}, yield information about $C^{1,\alpha}$ $n$-rectifiability, see Definition \ref{d:rect}, of measures. In 1995, Melnikov discovered an identity for the ($1$-dimensional or classical) Menger curvature \cite{melnikov1995analytic} which, in the complex plane, greatly simplified the existing proofs relating rectifiability to the $L^{2}$-boundedness of the Cauchy integral operator \cite{melnikov1995geometric, mattila1996cauchy}. The dream was that a notion of curvature, and similar identity, could be found in higher dimensions to produce simpler proofs demonstrating the equivalence of uniform rectifiability to the $L^{2}$-boundedness of singular integral operators. Alas, in 1999 Farag showed that in higher-dimensions no such identity could exist \cite{farag1999riesz}. Nonetheless, geometric arguments made with non-trivial adaptations from \cite{leger1999menger} have since been used to characterize uniform rectifiability in all dimensions and codimensions in terms of Menger-type curvatures, \cite{lerman2009high, lerman2011high}. A sufficient condition for rectifiability of sets in terms of higher dimensional Menger-type curvatures appears in \cite{meurer2018integral} and was extended to several characterizations of rectifiable measures under suitable density conditions \cite{goering2018characterizations}.

Menger curvatures have also been used to quantify higher regularity (in a topological sense) of surfaces, see, for instance \cite{strzelecki2011integral, blatt2012sharp}. Of particular relevance in our context, \cite{kolasinski2017higher} showed that finiteness of $\curv_{\mu;p}^{\alpha}$, see Definition \ref{d:mc}, is a sufficient condition for $C^{1,\alpha}$ $n$-rectifiability of measures. A formulation of the theorem is\footnote{The familiar reader may be aware that there are two additional parameters in the theorem of \cite{kolasinski2017higher}. One such parameter is the ability to choose from a small family functions to replace the $h_{\min}$ in the integrand that defines $\curv_{\mu;p}^{\alpha}$. Such choices have previously been shown to be comparable to one another, see \cite{lerman2009high, lerman2011high, lerman2012least} or \cite[8.6-8.8]{kolasinski2017higher}. The second such parameter was originally denoted by $l$. The $l = n+2$ case is written here. Since any other choice of $l$ is a stronger assumption, (changing the parameter $\ell$ is equivalent to replacing an $L^{p}$ bound on some number of components of the integrand with an $L^{\infty}$ bound) we chose to remove this for readability.}

\begin{theorem} \label{t:kol}
Let $\mu$ be a Radon measure on $\R^{m}$, with $0 < \Theta^{n}_{*}(\mu,x) \le \Theta^{n,*}(\mu,x) < \infty$, for $\mu$-a.e. $x \in \R^{m}$ and let $1 \le p < \infty$, $0 < \alpha \le 1$. If for $\mu$- a.e. $x \in \R^{m}$
\begin{equation} \label{e:fmpc}
\curv_{\mu;p}^{\alpha}(x,1) < \infty,
\end{equation}
then $\mu$ is $C^{1,\alpha}$ $n$-rectifiable.
\end{theorem}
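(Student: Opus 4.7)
The plan is to prove Theorem \ref{t:kol} by routing through Jones $\beta$-numbers, exploiting the relationship between discrete Menger-type curvatures and flatness quantities that the abstract advertises. In broad strokes, we will establish (i) a pointwise bound showing that an $\alpha$-weighted Jones square function is controlled by $\curv^{\alpha}_{\mu;p}(x,1)$, and then invoke (ii) a known $\beta$-number characterization of $C^{1,\alpha}$ $n$-rectifiability of measures to conclude. This cleanly separates the ``curvature-to-flatness'' estimate from the ``flatness-to-regularity'' argument.

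For step (i), I would follow the philosophy of \cite{meurer2018integral, goering2018characterizations}: the integrand $h_{\min}(y_{0},\ldots,y_{n+1})$ in the definition of $\curv^{\alpha}_{\mu;p}$ measures the deviation of an $(n+2)$-tuple from lying in a common $n$-plane, so averaging it with respect to $\mu^{\otimes (n+2)}$ on a ball $B(x,r)$ yields a quantity comparable to a Jones $\beta_{\mu;p}(x,r)$-number whenever the points are suitably spread (guaranteed by the density hypothesis on a large set via a pigeonhole argument). The weight $r^{-\alpha(n+2)p + \ldots}$ built into $\curv^{\alpha}_{\mu;p}$ is precisely tuned so that, after Fubini, one extracts the $\alpha$-weighted square function
\begin{equation*}
J^{\alpha}_{\mu;p}(x) \defeq \int_{0}^{1} \beta_{\mu;p}(x,r)^{2}\,\frac{\dif r}{r^{1+2\alpha}},
\end{equation*}
with a bound of the form $J^{\alpha}_{\mu;p}(x) \lesssim \curv^{\alpha}_{\mu;p}(x,1)$ valid $\mu$-a.e.\ on a set where the lower density is bounded below. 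Restricting to such a set via standard decomposition lets one assume uniform lower-density bounds without loss of generality.

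For step (ii), I would appeal to the $C^{1,\alpha}$ analogue of the Azzam--Tolsa / Tolsa rectifiability criterion: under $0 < \Theta^{n}_{*}(\mu,x) \le \Theta^{n,*}(\mu,x) < \infty$ $\mu$-a.e., finiteness of $J^{\alpha}_{\mu;p}(x)$ at $\mu$-a.e.\ $x$ implies that $\mu$ is $C^{1,\alpha}$ $n$-rectifiable. This is the measure-theoretic analogue of Ghinassi's characterization of $C^{1,\alpha}$ rectifiable sets via a summable $\alpha$-weighted Jones function, and it is the ``black box'' we need: combined with the bound from step (i), the hypothesis $\curv^{\alpha}_{\mu;p}(x,1) < \infty$ at $\mu$-a.e.\ $x$ forces $J^{\alpha}_{\mu;p}(x) < \infty$ on a good set exhausting $\mu$, and hence $C^{1,\alpha}$ $n$-rectifiability.

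The main obstacle is step (i): establishing the pointwise comparison $\beta_{\mu;p}(x,r)^{2} \lesssim$ (local contribution of the Menger integrand at scale $r$) in a way that survives integration against $\dif r / r^{1+2\alpha}$. The difficulty is twofold. First, the Menger integrand only ``sees'' $(n+2)$-tuples in sufficiently non-degenerate position, so one must show, using only the weak density hypotheses, that a positive fraction of configurations drawn from $B(x,r)$ are well-spread — this is where a careful layer of pigeonholing and density reduction is needed. Second, aligning the weights $r^{\alpha}$ with the geometric exponents appearing in $\curv^{\alpha}_{\mu;p}$ requires bookkeeping to guarantee that the Jones square function one extracts is genuinely the $\alpha$-weighted one, rather than one with a mismatched exponent that would only yield $C^{1,\alpha'}$ for some $\alpha' < \alpha$.
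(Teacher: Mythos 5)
The first thing to note is that the paper does not actually prove Theorem \ref{t:kol}: it is quoted from \cite{kolasinski2017higher}, and the proof sketched in the introduction is entirely different from yours --- Kolasi\'nski first shows that \eqref{e:fmpc} implies Lipschitz $n$-rectifiability via Allard's tangent-cone characterization \cite[\S 2.8 Theorem 5]{allard1972first}, and then upgrades the Lipschitz pieces to $C^{1,\alpha}$ using \cite[Lemma A.1]{schatzle2009lower}. Your $\beta$-number route is instead the strategy the authors use for their own Theorem \ref{t:maintheorem}, which is deliberately restricted to $p=2$ and $\alpha<1$; in Remark \ref{r:kp2} they explicitly claim only to recover the \emph{Lipschitz rectifiability} half of Kolasi\'nski's argument for general $p$, not the full $C^{1,\alpha}$ conclusion.

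The reason is a genuine gap in your step (i), precisely at the point you flag as ``bookkeeping.'' The averaging argument (the paper's proof of Theorem \ref{t:ARcase} together with Remark \ref{r:mp}) controls $\int_0^1 \bigl(\beta_p^{\mu}(x,r)/r^{\alpha}\bigr)^{p}\,\frac{dr}{r}$ by $\curv^{\alpha}_{\mu;p}(x,1)$ --- the $p$-th power with weight $r^{-p\alpha}$, not the squared quantity $\int_0^1 \beta_{\mu;p}(x,r)^2\,\frac{dr}{r^{1+2\alpha}}$ you write down. The available flatness-to-regularity criterion, Theorem \ref{t:silvia}, needs $\int_0^1 \beta_2^{\mu}(x,r)^2\, r^{-2\alpha}\,\frac{dr}{r}<\infty$, and the conversion fails for $p\neq 2$: for $p>2$, H\"older in $r$ (as in Proposition \ref{p:betap}) must spend the factor $r^{2\alpha}$ to make the conjugate integral converge, so one only obtains the \emph{unweighted} $\int_0^1\beta_2^2\,\frac{dr}{r}<\infty$ (hence Lipschitz rectifiability via \cite{naber}), or at best the weight $r^{-2\alpha'}$ for $\alpha'<\alpha$; for $p<2$, the pointwise bound $\beta_2^2\lesssim\beta_p^p$ leaves you with weight $r^{-p\alpha}$, again strictly weaker than $r^{-2\alpha}$. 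Either way this route yields only $C^{1,\alpha'}$ $n$-rectifiability for $\alpha'<\alpha$, not the claimed $C^{1,\alpha}$. In addition, your step (ii) black box does not exist in the generality you need: Theorem \ref{t:silvia} is stated for $\beta_2$ and $\alpha\in(0,1)$, with the endpoint $\alpha=1$ requiring a Dini-modified weight, whereas Theorem \ref{t:kol} includes $\alpha=1$. Your plan is essentially correct and matches the paper's argument when $p=2$ and $\alpha\in(0,1)$, but as a proof of Theorem \ref{t:kol} in its stated generality the exponent mismatch is an obstruction, not a formality --- resolving it is exactly what forces Kolasi\'nski's two-step proof through Sch\"atzle's lemma.
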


The goal of this article is to prove that for a Radon measure $\mu$ satisfying relaxed density assumptions and for any $\alpha \in [0,1)$ if the pointwise Menger curvature with $p=2$, see Definition \ref{d:mc}, is finite at $\mu$ a.e. $x$ then $\mu$ is $C^{1,\alpha}$ $n$-rectifiable. More precisely,

\begin{thmz} \label{t:maintheorem}
Let $\mu$ be  a Radon measure on $\R^{m}$, with $0<\Theta^{n,*}(\mu,x)< \infty$ for $\mu$-a.e. $x \in \R^{m}$, and let $\alpha \in [0,1)$. 
If for $\mu$- a.e. $x \in \R^{m}$
\begin{equation} \label{e:fmc}
\curv^{\alpha}_{\mu;2}(x,1) < \infty,
\end{equation}
then  $\mu$ is $C^{1,\alpha}$ $n$-rectifiable.
\end{thmz}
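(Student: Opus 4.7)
The plan is to transfer the pointwise control coming from the Menger curvature to a pointwise control on a Jones-type $\beta$-number square function, and then to invoke a sufficient condition for $C^{1,\alpha}$ $n$-rectifiability in terms of that square function (the first author's version of the Dorronsoro/Azzam--Tolsa criterion adapted to $C^{1,\alpha}$ regularity). Concretely, the chain I would set up is
\[
\curv^{\alpha}_{\mu;2}(x,1) < \infty \ \Longrightarrow \ \int_{0}^{1} \beta_{2,\mu}^{n}(x,r)^{2}\,\frac{\dif r}{r^{1+2\alpha}} < \infty \ \Longrightarrow \ \mu \ \text{is} \ C^{1,\alpha} \ n\text{-rectifiable}.
\]

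Before performing the analytic estimate, I would carry out a density reduction. The hypothesis $0<\Theta^{n,*}(\mu,x)<\infty$ $\mu$-a.e.\ allows a countable stratification so that it suffices to prove the conclusion for $\mu\restr E$, where $E$ is a Borel set on which there exist constants $0<c_1\le c_2<\infty$ and $r_0>0$ with
\[
c_{1}r^{n}\le \mu(B(x,r))\le c_{2}r^{n} \qquad \text{for all } x\in E, \ 0<r\le r_{0}.
\]
This is standard and produces a measure satisfying both the upper and lower Ahlfors bounds needed to compare integrals over $(n+1)$-tuples of points to integrals over single points.

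The heart of the argument is the pointwise inequality: for $\mu$-a.e.\ $x\in E$,
\[
\int_{0}^{r_{0}}\beta_{2,\mu}^{n}(x,r)^{2}\,\frac{\dif r}{r^{1+2\alpha}} \ \lesssim \ \curv_{\mu;2}^{\alpha}(x,1) + C(c_{1},c_{2},n,\alpha).
\]
The mechanism is Lerman--Whitehouse/Meurer-style geometry: the integrand in $\curv^{\alpha}_{\mu;2}$ is essentially $h_{\min}^{2}/\diam^{n(n+2)+2(1+\alpha)}$, and for fixed $x$ and scale $r$ one integrates $h_{\min}(x,y_{0},\ldots,y_{n})^{2}$ against $\mu^{\otimes(n+1)}$ restricted to $B(x,r)^{n+1}$. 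With the lower Ahlfors bound one obtains, with positive probability in the $y_{1},\ldots,y_{n}$ variables, a well-spread simplex spanning an affine plane that is comparable to a best-approximating $n$-plane $L$ in $B(x,r)$; then $h_{\min}(x,y_{0},\ldots,y_{n})\gtrsim \dist(y_{0},L)$, and averaging yields $\beta_{2,\mu}^{n}(x,r)^{2}\,r^{2+2\alpha}$ up to the correct normalizations. Slicing $B(x,1)^{n+1}$ dyadically in $\diam\sim r$ then produces the $\dif r/r^{1+2\alpha}$ weight and the desired pointwise bound.

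Once this estimate is in hand, the conclusion follows from the $\beta$-number criterion for $C^{1,\alpha}$ rectifiability of measures satisfying the reduced Ahlfors regularity bounds on $E$: finiteness of that weighted $\beta_{2}$ square function at $\mu$-a.e.\ $x\in E$ implies that $\mu\restr E$ is $C^{1,\alpha}$ $n$-rectifiable, and summing over the countable stratification gives the theorem. I expect the main obstacle to be the simplex-spreading step in the pointwise inequality: extracting a quantitative lower bound on the measure of well-spread $(n+1)$-tuples whose affine span is close to a best-fit plane requires the uniform lower density bound produced in the reduction, and this is precisely why the result goes through under an upper density assumption once one restricts to $E$ rather than needing the two-sided pointwise density bound of Theorem~\ref{t:kol}.
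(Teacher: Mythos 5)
Your overall strategy --- bounding the weighted Jones function $\int_0^1 \beta_2^{\mu}(x,r)^2\,\dif r/r^{1+2\alpha}$ by $\curv^{\alpha}_{\mu;2}(x,1)$ via well-spread simplices and then invoking the $C^{1,\alpha}$ criterion of Theorem \ref{t:silvia} --- is exactly the paper's, and your description of the geometric core (selecting $y_1,\dots,y_n$ from balls of definite measure so that $\dist(z,\aff\{x,y_1,\dots,y_n\})\lesssim_{\delta} h_{\min}(x,z,y_1,\dots,y_n)$, averaging over those balls, and summing over a geometric sequence of scales) matches the proof of Theorem \ref{t:ARcase} via Lemma \ref{t:likeNV}.

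The gap is in your density reduction. You claim that the hypothesis $0<\Theta^{n,*}(\mu,x)<\infty$ alone permits a stratification onto sets $E$ on which $c_1 r^n\le\mu(B(x,r))$ for all small $r$. It does not: $\Theta^{n,*}$ is a $\limsup$, so its positivity only yields $\mu(B(x,r_k))\ge c\,r_k^n$ along some sequence $r_k\to 0$, not a lower bound at every small scale --- and the simplex-spreading step genuinely needs $\mu(B(x,r))\ge\lambda r^n$ for \emph{all} $0<r\le R$, i.e.\ $\Theta^{n}_{*}(\mu,x)>0$. That lower density positivity is not among the hypotheses and must be extracted from the curvature condition itself. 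The paper does this by noting that $\curv^{\alpha}_{\mu;2}(x,1)<\infty$ implies $\curv^{0}_{\mu;2}(x,1)<\infty$, which by the $\alpha=0$ case (\cite[Theorem 1.19]{goering2018characterizations}) already makes $\mu$ Lipschitz $n$-rectifiable, and rectifiable measures with the stated density bounds have $\Theta^n_*(\mu,x)>0$ a.e. Without this step (or a substitute) your reduction does not get off the ground, and your closing remark that the upper density assumption suffices ``once one restricts to $E$'' is circular. A secondary point: once $\Theta^n_*>0$ is in hand, the paper only stratifies to obtain \emph{upper} Ahlfors regularity (Lemma \ref{l:chopping}), keeping the lower bound pointwise with an $x$-dependent $\lambda$, which is all Lemma \ref{t:likeNV} requires; insisting on a uniform two-sided Ahlfors bound on $E$, as you do, is both unnecessary and harder to arrange for the restricted measures, for which one only recovers pointwise lower density via \cite[Theorem 2.12(2)]{mattila}.
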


The $\alpha = 0$ case in Theorem \ref{t:maintheorem} appears in \cite{goering2018characterizations}. The case $\alpha >0$ is an improvement of a special case of \cite{kolasinski2017higher} where the lower density assumption is relaxed.

A rough sketch of the proof is as follows: the condition \eqref{e:fmc} is shown to imply ``flatness'' of the support of $\mu$ in terms of Jones' square function, and consequently (pieces of) the support of $\mu$ can be parametrized by Lipschitz graphs (see \cite[Theorem 5.4]{meurer2018integral}) when $\alpha = 0$ or $C^{1,\alpha}$ images (see \cite[Theorem II]{ghinassi2017sufficient}) when $\alpha > 0$.

On the other hand, the original proof provided by Kolasi{\'n}ski had two major parts. First, under the additional assumption that $\mu$ is Lipschitz $n$-rectifiable Kolasi{\'n}ski showed that the condition \eqref{e:fmpc} forced additional flatness on the Lipschitz functions that cover the support of $\mu$, which consequently implied better regularity on each such function (see \cite[Lemma A.1]{schatzle2009lower}). Second, it remained to show that \eqref{e:fmpc} implies Lipschitz $n$-rectifiability of $\mu$. This was done by appealing to a characterization of rectifiability from \cite[\S 2.8 Theorem 5]{allard1972first} which roughly says that if the approximate tangent cone (in the sense of Federer) of $\mu$ is contained in an $n$-plane at almost every point, then $\mu$ is Lipschitz $n$-rectifiable.

\begin{remark} \label{r:kp2}
Note that Theorem \ref{t:maintheorem} requires $0 < \Theta^{n,*}(\mu,x)$ for $\mu$ a.e. $x$, whereas Theeorem \ref{t:kol} requires the stronger assumption that $0 < \Theta^{n}_{*}(\mu,x)$ for $\mu$ a.e. $x$. The stronger density assumption in Theorem \ref{t:kol} allows one to apply Theorem \ref{t:ARcase}. Then Remark \ref{r:mp} and Proposition \ref{p:betap} provide a direct proof that
$$
\int_{0}^{1} \beta^{\mu}_{2}(x,r)^{2} \frac{dr}{r} < \infty \quad \mu ~ a.e. ~ x \in \R^{m}
$$
when working under the hypotheses of Theorem \ref{t:kol}. This provides an alternative proof to what we previously called the second major part of the original proof of Theorem \ref{t:kol}.

The question of whether the proof of Theorem \ref{t:maintheorem} when $\alpha = 0$ can be completed by appealing to \cite{azzamtolsanrect} after controlling Jones' function as above is an interesting one. Presently, the authors do not know how to do this without additionally assuming $0 < \Theta^{n}_{*}(\mu,x)$ for $\mu$ almost every $x$.

Another difference between the two theorems is that Theorem \ref{t:maintheorem} is stated only when $p=2$. Since increasing $p$ only makes condition \eqref{e:fmpc} harder to satisfy, the results in Theorem \ref{t:kol} are not sharp in terms of the parameter $p$. We expect that varying the parameter $p$ would lead to results about rectifiability in the sense of Besov spaces, which is beyond the scope of this article.
\end{remark}
The proof of Theorem \ref{t:maintheorem} is divided in two parts. First, we prove the claim for a measure $\mu$ which is $n$-Ahlfors upper regular on $\R^m$ and with positive lower density. Then, we use standard techniques to reduce the general case to the previous one.

\section{Notation and Background}

We begin by stating some definitions and notation.
\begin{definition}
Let $0 \leq s < \infty$ and let $\mu$ be a measure on $\R^m$. The upper and lower $s$-densities of $\mu$ at $x$ are defined by
\begin{align*} 
\Theta^{s,*}(\mu,x)& =\limsup_{r \to 0}\frac{\mu(B(x,r))}{r^s} \\
\Theta_{*}^s(\mu,x) &=\liminf_{r \to 0}\frac{\mu(B(x,r))}{r^s}.
\end{align*}
If they agree, their common value is called the $s$-density of $\mu$ at $x$ and denoted by
\begin{equation}
    \Theta^s(\mu,x)=\Theta^{s,*}(\mu,x)=\Theta_{*}^s(\mu,x).
\end{equation}
\end{definition}

\begin{definition}[\emph{$\beta_{p}$-numbers}] 
Given $x \in \R^m$, $r>0$, $p \in (1,\infty)$, an integer $0\leq n \leq m$, and a Borel measure $\mu$ on $\R^{m}$ define 
\begin{equation} \label{e:dbetap}
\beta^{\mu}_{p}(B(x,r))=\left(\inf_{L}  \frac{1}{r^n} \int_{B(x,r)} \left(\frac{\dist(y,L)}{r}\right)^p d\,\mu(y)\right)^{\frac{1}{p}},
\end{equation}
where the infimum is taken over all $n$-planes $L$.
 \end{definition}
 
When we talk about rectifiability and higher-order rectifiability, we mean what Federer would call ``countably $n$-rectifiable''. 

\begin{definition} \label{d:rect}
A measure $\mu$ on $\R^n$ is said to be (Lipschitz) $n$-rectifiable if there exist countably many Lipschitz maps $f_i \colon \R^n \to \R^m$  such that
\begin{equation} \label{e:drect}
\mu \left( \R^m \setminus \bigcup_i f_i(\R^n)\right)=0.
\end{equation}

A measure $\mu$ on $\R^n$ is $C^{1,\alpha}$ $n$-rectifiable if there exist countably many $C^{1,\alpha}$ maps $f_i \colon \R^n \to \R^m$  such that \eqref{e:drect} holds.
\end{definition}

In \cite{ghinassi2017sufficient}, a sufficient condition for $C^{1,\alpha}$ $n$-rectifiability in terms of $\beta$-numbers is provided.

 \begin{theorem}[\cite{ghinassi2017sufficient}] \label{t:silvia}
Let $\mu$ be a Radon measure on $\R^{m}$ such that \linebreak $\Theta^n_*(\mu,x)< \infty$ and $\Theta^{n,*}(\mu,x)>0$ for $\mu$-almost every $x \in \R^{m}$, and $\alpha \in (0,1)$. Moreover, suppose, for $\mu$-almost every $x \in \R^{m}$,
\begin{equation} \label{e:rect1}
J^{\mu}_{2,\alpha}(x) \defeq \int_0^1 \frac{\beta_2^{\mu}(x,r)^2}{r^{2\alpha}} \,\frac{dr}{r} <\infty
\end{equation}
Then, $\mu$ is $C^{1,\alpha}$ $n$-rectifiable.

When $\alpha=1$, if we replace $r$ in the left hand side of (\ref{e:rect1}) by $r\eta(r)$, where $\eta(r)^2$ satisfies the Dini condition, then we obtain that $\mu$ is $C^{2}$ $n$-rectifiable.
\end{theorem}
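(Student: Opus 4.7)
The plan is a multi-scale construction in the spirit of Jones's Traveling Salesman Theorem, with the weighted Jones square function $J^{\mu}_{2,\alpha}$ controlling the rate at which dyadic approximating planes converge and vary. First I would localize: using the density hypotheses together with the $\mu$-a.e.\ finiteness of $J^{\mu}_{2,\alpha}$, an Egorov/Lusin argument produces, for any $\varepsilon > 0$, a compact set $E \subseteq \R^{m}$ with $\mu(\R^{m} \setminus E) < \varepsilon$ on which the upper and lower density ratios $\mu(B(x,r))/r^{n}$ are uniformly bounded above and below at small scales, and on which $J^{\mu}_{2,\alpha}$ is uniformly bounded. Iterating over a family of such $E$ reduces the problem to covering $\mu\restr E$ by countably many $C^{1,\alpha}$ $n$-graphs.

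Fix $x_{0} \in E$. For each scale $2^{-k}$ let $L_{k}$ be an $n$-plane almost realizing the infimum in the definition of $\beta^{\mu}_{2}(B(x_{0}, 2^{-k}))$. The heart of the argument is the geometric estimate
\[
\angle(L_{k}, L_{k+1}) \lesssim \beta^{\mu}_{2}(B(x_{0}, 2^{-k})) + \beta^{\mu}_{2}(B(x_{0}, 2^{-k-1})),
\]
with implicit constant depending only on the density bounds on $E$. The idea is that both $L_{k}$ and $L_{k+1}$ are $L^{2}(\mu)$-close to $\supp\mu \cap B(x_{0}, 2^{-k-1})$, and the lower mass bound forces the measure to charge enough directions that any two planes so close to it must themselves be close in angle. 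I expect this geometric lemma to be the main technical obstacle---it must be formulated so that the dependence on $\beta_{2}$ is linear rather than square root, which is essential for the Cauchy--Schwarz step below and relies crucially on the upper density bound transferring the $L^{2}$ closeness into an $L^{\infty}$-in-direction statement.

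Given the angle bound, the convergence and Hölder continuity of the tangent planes follow by Cauchy--Schwarz. Writing $\beta_{k} = \beta^{\mu}_{2}(B(x_{0}, 2^{-k}))$,
\[
\sum_{k \geq K} \angle(L_{k}, L_{k+1}) \lesssim \left( \sum_{k \geq K} 2^{-2k\alpha} \right)^{\!1/2} \left( \sum_{k \geq K} 2^{2k\alpha} \beta_{k}^{2} \right)^{\!1/2} \lesssim J^{\mu}_{2,\alpha}(x_{0})^{1/2} \cdot 2^{-K\alpha},
\]
so $L_{k}$ is Cauchy and converges to a plane $L_{\infty}(x_{0})$ with $\angle(L_{K}, L_{\infty}(x_{0})) \lesssim 2^{-K\alpha}$. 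Running the same estimate with two base points $x_{0}, y_{0} \in E$ starting from the common scale $2^{-K} \approx |x_{0} - y_{0}|$ gives $\angle(L_{\infty}(x_{0}), L_{\infty}(y_{0})) \lesssim |x_{0} - y_{0}|^{\alpha}$, so the map $x \mapsto L_{\infty}(x)$ is $\alpha$-Hölder on $E$.

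The final step is to promote this infinitesimal information into a $C^{1,\alpha}$ graph parametrization. Working in coordinates where $L_{\infty}(x_{0})$ is horizontal, the multi-scale closeness estimates combined with the density bounds (including, as needed, a pointwise $\beta_{\infty}$ upgrade of $\beta_{2}$ on $E$) force $\supp\mu \cap B(x_{0}, r_{0})$ to be contained in the graph of a function $f \colon L_{\infty}(x_{0}) \to L_{\infty}(x_{0})^{\perp}$ whose differential at each point coincides with the tangent plane $L_{\infty}$ constructed above; extending $f$ from the support to all of $L_{\infty}(x_{0})$ by a Whitney-style extension preserves the regularity. The Hölder bound on $L_{\infty}$ then reads $\|Df(x) - Df(y)\| \lesssim |x - y|^{\alpha}$, yielding $f \in C^{1,\alpha}$. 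For the $\alpha = 1$ addendum, the geometric series $\sum 2^{-K\alpha}$ in the Cauchy--Schwarz step is replaced by a sum whose convergence is furnished by the Dini condition on $\eta$, producing a continuous (rather than Hölder) tangent modulus and hence, after one further integration, a $C^{2}$ parametrization.
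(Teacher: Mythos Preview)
The paper does not prove this theorem; it is quoted from \cite{ghinassi2017sufficient} as a black box and then invoked in the proof of Theorem~\ref{t:ARcase}. So there is no ``paper's own proof'' to compare against here.

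That said, your sketch is a plausible route to the result and is, at the level of ideas, compatible with how such theorems are typically proved. A few remarks on where it diverges from the argument in \cite{ghinassi2017sufficient} and where you would need to be careful. The original proof proceeds via a Reifenberg-type parametrization in the style of David--Toro: one builds a sequence of biLipschitz maps from a fixed plane whose images track the approximating planes at each dyadic scale, and the weighted square-function bound \eqref{e:rect1} is exactly what forces these maps to converge in $C^{1,\alpha}$. Your approach instead extracts pointwise tangent planes, proves they vary $\alpha$-H\"older, and then invokes Whitney extension. Both strategies are viable, but yours requires an additional localization step you only gesture at: to know that $E$ sits in a graph over $L_{\infty}(x_{0})$ one needs a cone condition, which in turn requires the $\beta$-numbers to be \emph{small} (not merely square-summable) below some scale, and one must also compare the planes $L_{K}(x_{0})$ and $L_{K}(y_{0})$ at the common scale $2^{-K}\approx |x_{0}-y_{0}|$, which is a separate geometric lemma from the single-point angle estimate you state. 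Finally, your treatment of the $\alpha=1$ case is too quick: a Dini modulus on the tangent planes does not by itself yield $C^{2}$ via ``one further integration''; the Reifenberg construction in \cite{ghinassi2017sufficient} handles this by tracking the modulus through the parametrization, and a Whitney-extension argument would need a comparably careful statement.
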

\begin{remark} 
We say that a function $\omega$ satisfies the Dini condition if \linebreak $\int_{0}^{1} \frac{\omega(r)}{r} dr < \infty$. A possible choice for $\eta$ in Theorem \ref{t:silvia} is $\eta(r)=\frac{1}{\log(1/r)^{\gamma}}$, for $\gamma > \frac12$.
\end{remark}

\begin{definition}[\emph{Classical Menger curvature}] \label{d:cmc}
Given three points $x,y,z \in \R^{m}$, the (classical) Menger curvature is defined to be the reciprocal of the circumradius of $x,y,z$. That is,
$$
c(x,y,z) = \frac{1}{R(x,y,z)},
$$
where $R(x,y,z)$ is the radius of the unique circle passing through $x,y,z$.
\end{definition}

In order to work with higher dimensional Menger curvatures, we introduce some notation for simplices in $\R^m$.
\begin{definition}[\emph{Simplices}]
Given points $\{x_{0}, \dots, x_{n}\} \subset \R^{m}$, $\Delta(x_{0}, \dots, x_{n})$ will denote the convex hull of $\{x_{0}, \dots, x_{n}\}$. In particular, if $\{x_{0}, \dots, x_{n}\}$ are not contained in any $(n-1)$-dimensional plane, then  $\Delta(x_{0}, \dots, x_{n})$ is an $n$-dimensional simplex with corners $\{x_{0}, \dots, x_{n}\}$. Moreover, we denote by $\aff\{x_{0}, \dots, x_{n}\}$ the smallest affine subspace containing $\{x_{0}, \dots, x_{n}\}$. That is $\aff \{x_{0}, \dots, x_{n} \} = x_{0} + \Span \{x_{1}- x_{0}, \dots, x_{n} - x_{0} \}$. Then, we define
\[
h_{\min}(x_{0}, \dots, x_{n}) = \min_{i} \dist(x_{i}, \aff\{x_{0}, \dots, x_{i-1}, x_{i+1}, \dots, x_{n} \}),
\]
to be the minimum height of a vertex over the plane spanned by the opposing face. If $\Delta = \Delta(x_{0}, \dots, x_{n})$ we occasionally abuse notation and write $h_{\min}(\Delta)$ in place of $h_{\min}(x_{0}, \dots, x_{n})$. If $\Delta$ as before is an $n$-simplex, it is additionally called an $(n, \rho)$-simplex if 
\[
h_{\min}(x_{0}, \dots, x_{n}) \ge \rho.
\]
\end{definition}

\begin{definition}[\emph{Menger curvatures}] \label{d:mc}
For $x \in \R^m$, $r>0$, $\alpha \in [0,1)$, an integer $0\leq n \leq m$, and $p \in [1,\infty]$, we define the curvature of $\mu$ at $x$ of scale $r$ to be 
\begin{align} \label{e:silcurv} \hspace{.75in}
\hspace{-.8in}\curv^{\alpha}_{\mu;p}(x,r) = \int_{B(x,r)^{n+1}} \frac{h_{\min}(x,x_{1}, \dots, x_{n+1})^p}{\diam \left( \{x,x_{1}, \dots, x_{n+1}\} \right)^{p(1+\alpha) + n(n+1)}} d\mu^{n+1},
\end{align}
where $\mu^{n+1}$ is the product measure defined by taking $(n+1)$-products of $\mu$ with itself.
\end{definition}

\section{Proof of Theorem \ref{t:maintheorem}} \label{s:AR}

We now proceed to prove the theorem in the case where $\mu$ is $n$-Ahlfors upper regular on $\R^m$ and has positive lower density $\mu$-almost everywhere.

We recall the following Lemma from \cite[Lemma 3.13]{goering2018characterizations}, which says that, under the appropriate density assumptions on a measure $\mu$, given a point $x$ and radius $r$, the ball $B(x, r)$ contains a large number of effective $n$-dimensional secant planes through $x$. 
\begin{lemma} \label{t:likeNV}
Let $\mu$ be an $n$-Ahlfors upper-regular Radon measure on $\R^{m}$ with upper-regularity constant $C_{0}$. Suppose $x \in \R^{m}$ and $\lambda, R > 0$ such that 
\begin{equation} \label{E:1}
\mu(B(x,r)) \ge \lambda r^{n}
\end{equation} 
holds for all $0 < r \le R$.

Then, for
\begin{equation} \label{E:deltadef}
\delta = \delta(n, \lambda, C_{0}) = \frac{\lambda}{2^{k+2} 5^{n-1} C_{0}} 
\end{equation}
and 
\begin{equation} \label{E:etadef}
\eta = \eta(n, \lambda, C_{0}) = \frac{\delta}{10n} = \frac{\lambda}{2^{k+3} 5^{n} n C_{0}}
\end{equation}
and all $0 < r \le R$ there exist points $\{x_{i,r}\}_{i=1}^{n}  \subset B(x,r)$ such that
\begin{equation} \label{E:bigh}
h_{\min}(x, x_{1,r}, \dots, x_{n,r}) \ge \delta r
\end{equation}
and
\begin{equation} \label{E:0}
(\mu \restr B(x,r)) (B(x_{i,r}, 5 \eta r)) \ge  \left( \frac{\lambda \eta^{m}}{2^{m+1}} \right) r^{n} = C_{2}(m,n,\lambda, C_{0}) r^{n}.
\end{equation}
In particular, if for each $i \in \{1, \dots, n\}$, $B_{i,r} \defeq B(x_{i,r}, 5 \eta r)$ for any choices of $y_{i} \in B_{i,r}$ it follows that
\begin{equation}\label{E:fczfat}
h_{\min}(x,y_{1}, \dots, y_{n}) \ge \delta r - 5 n \eta r = \frac{\delta r}{2}.
\end{equation} 

Finally, if $\mathbb{B}_{r} \defeq B_{1,r} \times \dots \times B_{n,r}$ then 
\begin{equation} 
\label{e:disjoint} \mathbb{B}_{\frac{\delta r}{3}} \cap \mathbb{B}_{r} = \emptyset.
\end{equation}
\end{lemma}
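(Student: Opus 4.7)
The plan is to construct $x_{1,r}, \dots, x_{n,r}$ inductively. Suppose $x_{1,r}, \dots, x_{i-1,r}$ have been chosen; set $V_{i-1} = \aff(x, x_{1,r}, \dots, x_{i-1,r})$ and $T_{i-1} = \{y \in B(x,r) : \dist(y, V_{i-1}) < \delta r\}$. The first observation is that the tube $T_{i-1}$ has small mass: since $V_{i-1} \cap B(x,r)$ is an $(i-1)$-dimensional disk of radius $r$, one can cover its $\delta r$-neighborhood by $O_n(\delta^{-(i-1)})$ balls of radius $O(\delta r)$, and $n$-upper-regularity gives $\mu(T_{i-1}) \lesssim_n C_0 \delta r^n$. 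The stated choice of $\delta$ (linear in $\lambda/C_0$) then yields $\mu(T_{i-1}) \leq \lambda r^n / 2$, so that $\mu(B(x,r) \setminus T_{i-1}) \geq \lambda r^n / 2$ thanks to the hypothesis \eqref{E:1}.

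To produce $x_{i,r}$ satisfying \eqref{E:0}, I would argue by contradiction: suppose every $y \in B(x,r) \setminus T_{i-1}$ had $\mu(B(y, 5\eta r) \cap B(x,r)) < C_2 r^n$. The $5r$-covering lemma gives at most $(2/\eta)^m$ balls $B(y_\alpha, 5\eta r)$ covering $B(x,r) \setminus T_{i-1}$ whose concentric $\eta r$-balls are pairwise disjoint inside $B(x, 2r)$. Summing the hypothetical upper bounds produces $\mu(B(x,r) \setminus T_{i-1}) < (2/\eta)^m C_2 r^n$, which contradicts $\lambda r^n / 2$ precisely when $C_2 = \lambda \eta^m / 2^{m+1}$, the value given in the statement. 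The resulting $x_{i,r}$ lies outside $T_{i-1}$, so $\dist(x_{i,r}, V_{i-1}) \geq \delta r$, and enjoys the claimed mass property.

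The fatness assertion \eqref{E:bigh} would then be deduced by combining the successive-heights identity $\operatorname{vol}_n(\Delta(x, x_{1,r}, \dots, x_{n,r})) = \tfrac{1}{n!}\prod_{i=1}^n \dist(x_{i,r}, V_{i-1})$ with the crude face bound $\operatorname{vol}_{n-1}(F_j) \leq (2r)^{n-1}/(n-1)!$ for each $(n-1)$-face (whose vertices all sit in $B(x,r)$), via the elementary relation $\operatorname{height}(x_j \to F_j) = n \operatorname{vol}_n(\Delta)/\operatorname{vol}_{n-1}(F_j)$; this simultaneously controls all $n+1$ heights, and the combinatorial constants (the factor $5^{n-1}$ in $\delta$) are chosen to absorb the losses from the face-volume bound. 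For the perturbation statement \eqref{E:fczfat}, shifting each vertex of $\Delta$ by at most $5\eta r = \delta r/(2n)$ moves every height by at most $n \cdot 5\eta r = \delta r / 2$, giving $h_{\min} \geq \delta r - \delta r/2 = \delta r / 2$ as stated.

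The disjointness \eqref{e:disjoint} is essentially a separation of scales. Each $x_{i,r}$ satisfies $|x_{i,r} - x| \geq \delta r$ because $x \in V_{i-1}$ and $\dist(x_{i,r}, V_{i-1}) \geq \delta r$; hence $B_{i,r}$ lies outside $B(x, \delta r (1 - 1/(2n))) \supset B(x, \delta r/2)$. At scale $\delta r /3$ the center $x_{i,\delta r/3}$ lies in $B(x, \delta r/3)$ and $B_{i,\delta r/3}$ has radius only $5\eta \cdot \delta r/3 = \delta^2 r/(6n)$, so $B_{i,\delta r/3} \subset B(x, \delta r/2)$. Every coordinate of the product is therefore disjoint from its counterpart at scale $r$. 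The step I expect to be the main obstacle is the fatness argument: the iterative construction only directly gives $\dist(x_{i,r}, V_{i-1}) \geq \delta r$, i.e., control on one height at a time, and transferring this to a uniform lower bound on all $n+1$ heights of the final $n$-simplex requires careful volume bookkeeping — exactly where the $5^{n-1}$ factor in the statement of $\delta$ originates.
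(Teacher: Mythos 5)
Your construction of the points $x_{i,r}$ is the standard one and, as far as the skeleton goes, it is the right approach: the paper itself does not prove this lemma but imports it from \cite[Lemma 3.13]{goering2018characterizations}, and the inductive tube-removal plus the Vitali/pigeonhole step there follows the same lines. Your verification of \eqref{E:0} (the mass lower bound on $B(x_{i,r},5\eta r)$) and of the disjointness \eqref{e:disjoint} is correct.

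The genuine gap is in the fatness conclusions \eqref{E:bigh} and \eqref{E:fczfat}. Your induction only yields $\dist(x_{i,r},\aff\{x,x_{1,r},\dots,x_{i-1,r}\})\ge\delta r$, and the volume bookkeeping you propose --- $\mathrm{vol}_n(\Delta)=\frac{1}{n!}\prod_i\dist(x_{i,r},V_{i-1})\ge(\delta r)^n/n!$ together with the Hadamard bound $\mathrm{vol}_{n-1}(F_j)\le(2r)^{n-1}/(n-1)!$ --- gives only $h_{\min}\ge\delta r\cdot(\delta/2)^{n-1}$. This falls short of \eqref{E:bigh} by the factor $(\delta/2)^{n-1}$, which depends on $\lambda/C_0$ and therefore cannot be ``absorbed'' into the purely combinatorial $5^{n-1}$ in \eqref{E:deltadef} (that factor comes from the covering number of the $(i-1)$-disk in the tube estimate, not from the face-volume bound). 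Indeed the greedy selection genuinely can produce $h_{\min}\approx\delta^n r$: take $n=2$, $x=(0,0)$, $x_{1,r}=(\delta r,0)$, $x_{2,r}=(r,\delta r)$; both successive distances equal $\delta r$ but the height of $x_{1,r}$ over $\aff\{x,x_{2,r}\}$ is $\approx\delta^2 r$. To get \eqref{E:bigh} as stated one needs a different selection, e.g.\ choosing $(x_{1,r},\dots,x_{n,r})$ to (nearly) maximize $\mathrm{vol}_n(\Delta(x,\cdot,\dots,\cdot))$ over the set of density points, so that \emph{every} vertex is extremal over its opposite face and the tube estimate can be applied to each face separately. Similarly, your justification of \eqref{E:fczfat} --- that moving each vertex by $5\eta r$ moves every height by at most $n\cdot 5\eta r$ --- is false in general: perturbing a vertex of the opposite face tilts its affine hull, and the induced change in the height of the far vertex is amplified by a factor on the order of $\diam(\Delta)/h_{\min}(\Delta)\approx 2/\delta$, not $1$. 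Since $5\eta r=\delta r/(2n)$ is comparable to $\delta r$, this step needs an actual quantitative argument (a plane-tilting lemma exploiting the non-degeneracy already established, or a direct rerun of the volume argument for the perturbed points), and as written it interacts with the same constant loss as in \eqref{E:bigh}. The weaker bounds $h_{\min}\gtrsim_n\delta^n r$ that your argument does give would still suffice for the application in Section \ref{s:AR} after adjusting \eqref{e:36}, but they do not prove the lemma as stated.
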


\begin{theorem} \label{t:ARcase}
If $\mu$ is an $n$-Ahlfors upper-regular measure on $\R^{m}$ such that $\Theta^{n}_{*}(\mu,x) > 0$ for all $x$, and
\[
\curv^{\alpha}_{\mu;2}(x,1) < \infty
\]
for $\mu$-almost every $x$, then $\mu$ is $C^{1,\alpha}$ $n$-rectifiable.
\end{theorem}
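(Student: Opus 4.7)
The plan is to deduce Theorem~\ref{t:ARcase} from Theorem~\ref{t:silvia} by pointwise controlling the Jones-type square function $J^{\mu}_{2,\alpha}(x)$ by the Menger curvature $\curv^{\alpha}_{\mu;2}(x,1)$. The density hypotheses of Theorem~\ref{t:silvia} are already in force: $n$-Ahlfors upper regularity gives $\Theta^{n,*}(\mu,x) < \infty$ (hence $\Theta^{n}_{*}(\mu,x) < \infty$), and by assumption $\Theta^{n}_{*}(\mu,x) > 0$ (hence $\Theta^{n,*}(\mu,x) > 0$). By a standard countable exhaustion I first reduce to the case where the uniform lower density estimate \eqref{E:1} of Lemma~\ref{t:likeNV} holds for all $r \le R_{0}$ with fixed constants $R_{0}, \lambda > 0$, so that Lemma~\ref{t:likeNV} becomes applicable at every scale.

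The central ingredient is the pointwise estimate
$$
\frac{\beta_{2}^{\mu}(x,r)^{2}}{r^{2\alpha}} \lesssim \int_{\mathbb{B}_{r} \times B(x,r)} \frac{h_{\min}(x,y_{1},\dots,y_{n},y_{0})^{2}}{\diam(\{x,y_{1},\dots,y_{n},y_{0}\})^{2(1+\alpha)+n(n+1)}}\,d\mu^{n+1}.
$$
To derive it I fix $(y_{1},\dots,y_{n}) \in \mathbb{B}_{r}$, set $L = \aff\{x,y_{1},\dots,y_{n}\}$, and use $L$ as a competitor for $\beta_{2}^{\mu}(x,r)$. By \eqref{E:fczfat}, $\Delta(x,y_{1},\dots,y_{n})$ is an $(n,\delta r/2)$-simplex, so an easy induction on dimension (using the fact that in any simplex edges dominate $h_{\min}$) shows its $n$-volume is $\gtrsim r^{n}$, while every $n$-face of the $(n+1)$-simplex $\Delta(x,y_{1},\dots,y_{n},y_{0})$ trivially has $n$-volume $\lesssim r^{n}$. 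Comparing the two natural volume representations of $\Delta(x,y_{1},\dots,y_{n},y_{0})$ then yields $\dist(y_{0},L) \lesssim h_{\min}(x,y_{1},\dots,y_{n},y_{0})$. Averaging over $(y_{1},\dots,y_{n}) \in \mathbb{B}_{r}$ (whose $\mu^{n}$-measure is $\gtrsim r^{n^{2}}$ by \eqref{E:0}) and using $\diam \le 2r$ in the Menger integrand, the powers of $r$ cancel precisely to produce the claimed factor $r^{2\alpha}$.

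The final step is integration in $r$. For fixed $(y_{1},\dots,y_{n},y_{0})$ the set of $r \in (0,R_{0}]$ for which $(y_{1},\dots,y_{n}) \in \mathbb{B}_{r}$ and $y_{0} \in B(x,r)$ is confined to a multiplicatively bounded interval: since edges of the simplex $\{x,x_{1,r},\dots,x_{n,r}\}$ dominate its heights, \eqref{E:bigh} gives $|x_{i,r} - x| \ge \delta r$, which together with $|y_{i} - x_{i,r}| \le 5\eta r$ pins $|y_{i}-x|/r$ between two positive constants. Hence the $\tfrac{dr}{r}$-measure of this slice is $O(1)$ and Fubini combined with the pointwise estimate yields
$$
J^{\mu}_{2,\alpha}(x) = \int_{0}^{1} \frac{\beta_{2}^{\mu}(x,r)^{2}}{r^{2\alpha}}\,\frac{dr}{r} \lesssim \curv^{\alpha}_{\mu;2}(x, CR_{0}),
$$
after noting that the $r \in (R_{0},1]$ scales contribute a trivially bounded amount because $\beta_{2}^{\mu}(x,r) \lesssim 1$. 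The right-hand side is finite $\mu$-a.e.\ (the enlargement of the ball from $B(x,1)$ to $B(x,CR_{0})$ is absorbed by a rescaling or by noting that the hypothesis at one scale extends to larger scales, since the additional tuples contribute a bounded integrand against a finite measure). Theorem~\ref{t:silvia} then gives $C^{1,\alpha}$ $n$-rectifiability of $\mu$.

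The main obstacle I anticipate is the bookkeeping in the integration step, namely verifying that the $r$-dependent sets $\mathbb{B}_{r}$ together with the constants $\delta, \eta$ from Lemma~\ref{t:likeNV} combine to yield exactly the weight $r^{2\alpha}$, and that the enlargement of the ball in the final curvature integral does not spoil the finiteness hypothesis. The core geometric inequality $\dist(y_{0},L) \lesssim h_{\min}$ is elementary once the fat-simplex structure of Lemma~\ref{t:likeNV} is in hand, and the reduction to uniform lower density is standard.
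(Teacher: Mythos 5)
Your proposal is correct and follows essentially the same route as the paper: the secant points of Lemma~\ref{t:likeNV}, the simplex volume comparison yielding $\dist(y_{0},\aff\{x,y_{1},\dots,y_{n}\})\lesssim h_{\min}$, averaging over $\mathbb{B}_{r}$, and the appeal to Theorem~\ref{t:silvia}. The only deviation is in the final integration over scales, where the paper discretizes to $r_{j}=(\delta/3)^{j}$ and uses the disjointness \eqref{e:disjoint} to bound the sum by a single integral over $B(x,1)^{n+1}$, whereas you integrate in $r$ directly via Fubini with a bounded-multiplicity count; the two are interchangeable, though the discrete version quietly sidesteps the measurability of $r\mapsto\mathbb{B}_{r}$ that a literal Fubini application would require.
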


We will prove Theorem \ref{t:ARcase} by showing that for $\mu$ as in the theorem statement, we can in fact show that
\[
\int_{0}^{1} \frac{\beta^{\mu}_{2}(x,r)^{2}}{r^{2\alpha}} \,\frac{\dif r}{r} < \infty
\]
for almost every $x \in \R^{m}$, and then appeal to Theorem \ref{t:silvia}. 
In the proof we will use a slight modification of the usual $\beta$-numbers introduced above, the so-called ``centered $\beta$-numbers'', that we denote by $\obeta^{\mu}_{2}$. These numbers are defined exactly as the $\beta$-numbers, except that the infimum for $\obeta^{\mu}_{2}(x,r)$ is restricted to $n$-planes passing through $x$. That is, for $x \in \R^m$, $r>0$ define
\[
\obeta^{\mu}_{2} (x,r)^{2} \defeq \inf_{L \ni x} \int_{B(x,r)}\left(\frac{\dist(z, L)}{r}\right)^{2}\,\frac{d\mu(z)}{r^n}.
\]
In particular, because the infimum in the definition of $\beta_2^{\mu}(x,r)$ is taken over a larger class than the one in $\obeta^{\mu}_{2} (x,r)$, we have $\beta^{\mu}_{2}(x,r) \le \obeta^{\mu}_{2}(x,r)$ for all $x \in \R^{m}, r > 0$.

\begin{proof}[Proof of Theorem \ref{t:ARcase}]
Let $\mu$ be as in the theorem statement, and $x$ a point so that $\Theta^{n}_{*}(\mu,x) > 0$. Then, there exists some $\lambda > 0$ so that for all $0 < r \le 1$, $\mu(B(x,r)) \ge \lambda r^{n}$. Now, fix $0 < r \le 1$. By the definition of infimum, for any $(y_1, \dots, y_n) \in (\R^{n})^{n}$, 
\begin{align}
\nonumber \obeta^{n}_{\mu;2}&(x,r)^{2}  = \inf_{L \ni x} \frac{1}{r^{n}} \int_{B(x,r)} \left( \frac{\dist(z, L)}{r} \right)^{2} d \mu(z) \\
 \label{e:30} & \leq \frac{1}{r^{n}} \int_{B(x,r)} \left( \frac{\dist(z, \aff\{x,y_{1}, \dots, y_{n}\})}{r} \right)^{2} d \mu(z).
 \end{align}
Choose $\{x_{i,r}\}, B_{i,r},$ and $\mathbb{B}_{r}$ as in Lemma \ref{t:likeNV}. Averaging \eqref{e:30} over all $(y_{1}, \dots, y_{n}) \in \B_r$, and then applying \eqref{E:0} yields
\begin{align}
\nonumber \obeta^{n}_{\mu;2}&(x,r)^{2} \leq \int_{\mathbb{B}_{r}} \int_{B(x,r)}  \left(\frac{\dist(z,\aff\{x,y_{1}, \dots, y_{n}\})}{r}\right)^{2} \frac{ d\mu(z) d \mu^{n}(y_{1}, \dots, y_{n})}{\mu^{n}(\mathbb{B}_{r}) r^{n}} \\
\label{e:34} & \le C \int_{\mathbb{B}_{r}} \int_{B(x,r)} \left(\frac{ \dist(z, \aff \{x,y_{1}, \dots, y_{n}\})}{r} \right)^{2} \frac{d \mu(z) d\mu^{n}(y_{1}, \dots, y_{n})}{r^{n^{2} + n}},
\end{align}
where $C= C(m,n,\lambda, C_{0})$. 
%
We now claim that,
\begin{equation} \label{e:36}
\dist(z, \aff \{x,y_{1}, \dots, y_{n}\}) \le \left( \frac{2}{\delta} \right)^{n} h_{\min}(x,z,y_{1}, \dots, y_{n}).
\end{equation}

Indeed, let $\Delta = \Delta(x,z,y_{1}, \dots, y_{n})$ and $\Delta_{w} = \Delta ( \{x,z, y_{1}, \dots, y_{n}\} \setminus \{w\})$, for each $w \in \{x,z,y_{1}, \dots, y_{n})$. Basic Euclidean geometry ensures that
\begin{align}  \label{e:37}
\dist(z, \aff\{ \Delta_{z} \}) \cH^{n}(\Delta_{z}) &=  (n+1) \cH^{n+1}(\Delta) \\
\nonumber & = h_{\min}(x,z,y_{1}, \dots, y_{n}) \cH^{n}(\Delta_{w_{0}}),
\end{align}
where $w_{0}$ is any vertex such that 
\[
\dist(w_{0}, \aff\{ \{x,z,y_{1}, \dots, y_{n}\} \setminus \{w_{0}\}\}) = h_{\min}(x,z,y_{1}, \dots, y_{n}).
\]
 On the other hand, since $\{x,z\} \cup B_{i,r} \subset B(x,r)$ for all $i = 1, \dots, n$, Equation \eqref{E:fczfat} ensures
\begin{equation} \label{e:38}
\frac{\cH^{n}(\Delta_{w_{0}})}{\cH^{n}(\Delta_{z})} \le \left( \frac{2r}{\delta r} \right)^{n}.
\end{equation}
The claim \eqref{e:36} now follows from \eqref{e:37} and \eqref{e:38}.

Evidently, $\diam\{x,z,y_{1}, \dots, y_{n}\} \le 2r $. Using this diameter bound, \eqref{e:36} and \eqref{e:34}, we conclude
\begin{align} \label{e:39}
\obeta^{2}_{\mu}(x,r)^{2} \le C \int_{\mathbb{B}_{r}} \int_{B(x,r)} \frac{h_{\min}(x,z,y_{1}, \dots, y_{n})^{2}}{\diam\{x,z,y_{1}, \dots, y_{n}\}^{n^{2} + n + 2}} d \mu(z) d \mu^{n}(y_{1}, \dots, y_{n}).
\end{align}

Setting $r_j = \left(\frac{\delta}{3}\right)^j$ and using the fact that $0 < \frac{\delta}{3} < 1$, it's known
\begin{equation} \label{e:40}
\int_{0}^{1} \frac{\obeta^{\mu}_{2}(x,r)^{2}}{r^{2\alpha}} \frac{dr}{r} \le C_{\delta} \sum_{j \ge 0} \frac{\obeta^{\mu}_{2}(x, r_j)^{2}}{(r_j)^{2\alpha}}.
\end{equation}
It now follows from \eqref{e:40}, \eqref{e:disjoint}, and \eqref{e:39} that
\begin{align*}
& \int_{0}^{1} \obeta^{\mu}_{2}(x,r)^{2} \frac{dr}{r^{1+2\alpha}}  \\
& \le C \int_{\cup_{j} \mathbb{B}_{r_{j}} \times B(x,r_{j})} \frac{h_{\min}(x,z,y_{1}, \dots, y_{n})^{2}}{\diam\{x,z,y_{1}, \dots, y_{n}\}^{n^{2} + n + 2 }r_{j}^{2\alpha}} d \mu^{n+1}(y_{1}, \dots, y_{n},z) \\
& \le  C \int_{\cup_{j} \mathbb{B}_{r_{j}} \times B(x,r_{j})} \frac{h_{\min}(x,z,y_{1}, \dots, y_{n})^{2}}{\diam\{x,z,y_{1}, \dots, y_{n}\}^{n^{2} + n + 2 + 2 \alpha }} d \mu^{n+1}(y_{1}, \dots, y_{n},z) \\
& \leq C \int_{B(x,1)^{n+1}} \frac{h_{\min}(x,x_{1}, \dots, x_{n+1})^2}{\diam \left( \{x,x_{1}, \dots, x_{n+1}\} \right)^{2(1+\alpha) + n(n+1)}} d\mu^{n+1}(x_1, \dots, x_{n+1}) \\
& = C \curv^{\alpha}_{\mu;2}(x,1),
\end{align*}
where in the penultimate step we used that $\mathbb{B}_{r_{j}} \times B(x,r_{j}) \subset B(x,1)^{n+1}$ for all $j$, and the non-negativity of the integrand.
\end{proof}

\begin{remark} \label{r:mp}
At this point, we briefly focus on the difference between conditions \eqref{e:fmc} and \eqref{e:fmpc}. The proof of Theorem \ref{t:ARcase} could be followed out identically, using $\curv_{\mu;p}^{\alpha}(x,1) < \infty$ in place of $\curv_{\mu;2}^{\alpha}(x,1)$ and by replacing appropriate $2$'s with $p$'s,  to obtain
$$
\int_{0}^{R} \left( \frac{\obeta^{\mu}_{p}(x,r)}{r^{\alpha}} \right)^{p} \frac{dr}{r} \le C \curv_{\mu;p}^{\alpha}(x,R) < \infty.
$$
Consequently, the following proposition is of interest, see Remark \ref{r:kp2}.
\end{remark}

\begin{proposition} \label{p:betap}
Let $\mu$ be a Radon measure on $\R^m$ such that $0<\Theta^{n,*}(\mu,x)$ and $\mu$ is Ahlfors upper-regular with constant $C_{0}$ for $\mu$-almost every $x \in \R^{m}$, $p \in [1,\infty)$, and $\alpha \in (0,1]$. If for $\mu$-a.e. $x \in \R^{m}$,
\[
\int_0^1 \left(\frac{\beta_p(x,r)}{r^{\alpha}}\right)^p \, \frac{dr}{r} < \infty,
\]
then $\mu$ is $n$-rectifiable.
\end{proposition}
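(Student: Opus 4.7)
The plan is to reduce the hypothesis to an integrated $\beta_2^{\mu}$ bound with a strictly positive power-gain, and then invoke Theorem \ref{t:silvia}. Concretely, the goal is to produce some $\alpha'\in(0,\alpha)$ such that
\[
\int_0^1 \frac{\beta_2^{\mu}(x,r)^2}{r^{2\alpha'}}\,\frac{dr}{r} < \infty \quad \text{for $\mu$-a.e. } x \in \R^m.
\]
Once this is in hand, the density hypotheses of Theorem \ref{t:silvia} are immediate: upper Ahlfors regularity forces $\Theta^n_*(\mu,x)\le C_0<\infty$, and $\Theta^{n,*}(\mu,x)>0$ is assumed. Theorem \ref{t:silvia} with parameter $\alpha'$ then gives $C^{1,\alpha'}$ $n$-rectifiability, and in particular $n$-rectifiability.

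To produce the desired $\beta_2^{\mu}$ estimate I would argue by interpolation, splitting on whether $p\le 2$ or $p>2$. In either case, the infimum defining $\beta_p^{\mu}(x,r)$ may be restricted to planes $L$ meeting $B(x,r)$, so that $\dist(y,L)/r\le 2$ for all $y\in B(x,r)$. When $p\in[1,2]$, the pointwise bound $(\dist(y,L)/r)^2\le 2^{2-p}(\dist(y,L)/r)^p$ integrates to $\beta_2^{\mu}(x,r)^2\lesssim \beta_p^{\mu}(x,r)^p$, so any $\alpha'\le p\alpha/2$ transfers the hypothesis directly; in this range the $\alpha>0$ assumption is not really used. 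When $p>2$, Hölder's inequality applied to $\mu\restr B(x,r)$, combined with upper Ahlfors regularity, gives $\beta_2^{\mu}(x,r)\le C_0^{1/2-1/p}\beta_p^{\mu}(x,r)$, and a radial Hölder step with exponents $(p/2,\,p/(p-2))$ yields
\[
\int_0^1 \frac{\beta_2^{\mu}(x,r)^2}{r^{2\alpha'}}\,\frac{dr}{r}\le C\left(\int_0^1\frac{\beta_p^{\mu}(x,r)^p}{r^{p\alpha}}\,\frac{dr}{r}\right)^{\!2/p}\!\left(\int_0^1 r^{2(\alpha-\alpha')p/(p-2)}\,\frac{dr}{r}\right)^{\!(p-2)/p},
\]
whose second factor is finite precisely because $\alpha-\alpha'>0$.

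The $\alpha>0$ hypothesis is doing essential work in the range $p>2$: it is exactly what makes the radial Hölder step integrable, trading the power that is lost upon Hölder-interpolating from $L^p$ down to $L^2$ at the spatial level. No substantive obstacle is expected beyond this elementary interpolation — once the $\beta_2^{\mu}$ square function with a strict power-gain is controlled, Theorem \ref{t:silvia} closes the argument.
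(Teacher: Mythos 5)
Your interpolation computation is correct and is essentially identical to the paper's: the same case split at $p=2$, the same pointwise bound $(\dist(y,L)/r)^2\lesssim(\dist(y,L)/r)^p$ for $p\le 2$, and for $p>2$ the same pair of H\"older steps (spatial, using upper regularity to get $\beta_2^{\mu}(x,r)\le C_0^{1/2-1/p}\beta_p^{\mu}(x,r)$, and radial with exponents $p/2$ and $p/(p-2)$, where $\alpha>0$ makes the second factor integrable). The only genuine difference is the endgame. The paper sets $\alpha'=0$, i.e.\ it deduces $\int_0^1\beta_2^{\mu}(x,r)^2\,\frac{dr}{r}<\infty$ and then invokes the quantitative Reifenberg theorem of \cite{naber}; you instead retain a positive gain $\alpha'\in(0,\alpha)$ and close with Theorem \ref{t:silvia}. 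Both are valid here, since upper Ahlfors regularity gives $\Theta^n_*(\mu,x)\le\Theta^{n,*}(\mu,x)\le C_0<\infty$ and $\Theta^{n,*}(\mu,x)>0$ is assumed, so the hypotheses of Theorem \ref{t:silvia} are met. Your route actually yields the stronger conclusion that $\mu$ is $C^{1,\alpha'}$ $n$-rectifiable for every $\alpha'<\min\{\alpha,p\alpha/2\}$ (respectively $\alpha'<\alpha$ for $p>2$), and it stays entirely within results already quoted in the paper; the paper's route via \cite{naber} is what allows the parenthetical observation that for $p\le 2$ the hypothesis $\alpha>0$ is not needed at all. On that last point your phrasing is slightly inconsistent with your own argument: for $p\le 2$ the interpolation step indeed does not use $\alpha>0$, but your appeal to Theorem \ref{t:silvia} does require $\alpha'>0$, hence $\alpha>0$; only the \cite{naber} endgame genuinely dispenses with it.
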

\begin{proof}
We show that the hypotheses imply that for $\mu$- a.e. $x \in \R^{m}$,
\[
\int_0^1 \beta^{\mu}_2(x,r)^2 \, \frac{dr}{r} < \infty
\]
and then employ \cite{naber} to obtain rectifiability.

For $p \leq 2$, it is enough to observe, from the definition of $\beta_p(x,r)^p$, that 
\[
\left(\frac{\dist(y,P)}{r}\right)^2 \leq 2^{\frac{2}{p}} \left(\frac{\dist(y,P)}{r}\right)^p
\]
as $\frac{\dist(y,P)}{r} \leq 2$. This immediately implies that $\beta_2(x,r)^2 \leq \beta_p(x,r)^p$, and hence we are done. Note that in this case we did not use that $\alpha>0$.

For $p>2 $, we use H\"older inquality:
\begin{align*}
\int_0^1 \beta_2(x,r)^2 \, \frac{dr}{r}& = \int_0^1\frac{ \beta_2(x,r)^2}{r^{2\alpha}} \cdot r^{2\alpha} \, \frac{dr}{r} \\
& \leq \left( \int_0^1\left(\frac{ \beta_2(x,r)^2}{r^{2\alpha}}\right)^{\frac{p}{2}}  \, \frac{dr}{r}\right)^{\frac{2}{p}}  \left( \int_0^1 (r^{2\alpha})^{\frac{p}{p-2}} \, \frac{dr}{r}\right)^{\frac{p-2}{p}}  \\
& \leq \left(  \int_0^1\left(\frac{ \beta_2(x,r)}{r^{\alpha}}\right)^{p}  \, \frac{dr}{r}\right)^{\frac{2}{p}}  \left( \int_0^1 r^{\frac{2p\alpha}{p-2}-1} \, dr\right)^{\frac{p-2}{p}}  \\
& \leq C_{p,\alpha,C_{0}}  \left(\int_0^1\left(\frac{ \beta_p(x,r)}{r^{\alpha}}\right)^{p}  \, \frac{dr}{r}\right)^{\frac{2}{p}},
\end{align*}
where in the last inequality we used the fact that, for $p>2$, \linebreak $\beta_2(x,r) \leq \left( \frac{ \mu(B(x,r))}{r^{n}} \right)^{\frac12-\frac{1}{p}} \beta_p(x,r)$.
\end{proof}

Finally, we reduce Theorem \ref{t:maintheorem} to Theorem \ref{t:ARcase}. 

\begin{lemma} \label{l:chopping}
Let $\mu$ be a Radon measure on $\R^m$ such that $0<\Theta^n_*(\mu,x) \leq \Theta^{n,*}(\mu,x) < \infty$ for $\mu$-a.e. $x \in \R^m$. Then there exist measures $\mu_k$ such that for every set $A$,
\[
\mu(A) = \lim_{k \to \infty} \mu_k(A)=\bigcup_{k=1}^{\infty}\mu_k(A),
\]
and such that each $\mu_k$ is upper $n$-Ahlfors regular and $\Theta^n_*(\mu_k,x)>0$ for $\mu_k$-a.e. $x$.
\end{lemma}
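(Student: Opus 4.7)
The plan is to carry out the standard ``chopping'' construction: at level $k$, peel off the portion of $\mu$ living on the set where the density ratios are pinched between $k^{-1}$ and $k$ on every scale below $1/k$. Concretely, I would define
\[
F_k \defeq \Bigl\{ x \in B(0,k) : k^{-1} r^n \le \mu(B(x,r)) \le k r^n \text{ for all } 0 < r \le 1/k \Bigr\}
\]
and set $\mu_k \defeq \mu \restr F_k$. First I would verify that $F_k$ is Borel: the map $x \mapsto \mu(B(x,r))$ is upper semicontinuous (since balls are closed-limits from the left and Radon measures are finite on compacta), so each inequality $\mu(B(x,r)) \le k r^n$ cuts out a Borel (in fact $G_\delta$-ish) set, and intersecting over a countable dense set of radii $r \in (0, 1/k] \cap \Q$ and recovering the remaining scales by taking $r^-$ limits suffices. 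The sequence $F_k$ is nested increasingly: if $x \in F_k$, the density bounds at $x$ become easier to satisfy when $k$ is replaced by $k+1$ (the range of allowed values widens and the set of required radii $(0, 1/(k+1)]$ shrinks).

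Second, I would check that $\bigcup_k F_k$ carries all of $\mu$. Under the hypothesis $0 < \Theta_*^n(\mu,x) \le \Theta^{n,*}(\mu,x) < \infty$, for $\mu$-a.e.\ $x$ there exist $\lambda = \lambda(x) > 0$, $C = C(x) < \infty$, and $r_0 = r_0(x) > 0$ such that $\lambda r^n \le \mu(B(x,r)) \le C r^n$ for $0 < r \le r_0$, so any $k \ge \max(\lambda^{-1}, C, r_0^{-1}, |x|)$ places $x$ into $F_k$. Consequently $\mu_k \uparrow \mu$ as set functions, giving $\mu(A) = \lim_k \mu_k(A)$ for every Borel set $A$.

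Third, I would verify the two regularity claims on each $\mu_k$. For the upper $n$-Ahlfors regularity, I split the scales. If $0 < r \le 1/(2k)$ and $B(x,r) \cap F_k = \emptyset$, then $\mu_k(B(x,r)) = 0$; otherwise picking $y \in F_k \cap B(x,r)$ gives $B(x,r) \subset B(y, 2r)$ with $2r \le 1/k$, hence $\mu_k(B(x,r)) \le \mu(B(y, 2r)) \le k(2r)^n = 2^n k r^n$. For the large-scale regime $r > 1/(2k)$, the total mass bound $\mu_k(B(x,r)) \le \mu(F_k) \le \mu(B(0,k)) < \infty$ (finite because $F_k$ is bounded and $\mu$ is Radon) is upgraded by the trivial estimate $1 \le (2kr)^n$ to $\mu_k(B(x,r)) \le \mu(B(0,k)) (2k)^n r^n$. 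Taking the max of the two constants produces a single upper-regularity constant $C_0(k)$.

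Finally, for positive lower density $\mu_k$-a.e., I would invoke the Lebesgue density theorem for Radon measures: since $F_k$ is Borel, $\lim_{r \to 0} \mu(B(x,r) \cap F_k)/\mu(B(x,r)) = 1$ for $\mu \restr F_k$-a.e.\ $x$. Combined with the built-in lower bound $\mu(B(x,r)) \ge k^{-1} r^n$ valid on $F_k$ for $r \le 1/k$, this gives $\mu_k(B(x,r)) \ge \tfrac{1}{2} k^{-1} r^n$ for all sufficiently small $r$, hence $\Theta_*^n(\mu_k, x) \ge (2k)^{-1} > 0$ for $\mu_k$-a.e.\ $x$. The only mildly delicate point is this last step — ensuring the restriction does not kill the lower density — but it is handled cleanly by the density theorem, and everything else is bookkeeping.
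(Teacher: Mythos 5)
Your proof is correct and follows essentially the same route as the paper's: restrict $\mu$ to the sets where the density ratio is controlled at small scales, observe that these sets exhaust $\mu$ by the density hypotheses, and recover the positive lower density of the restriction via the Lebesgue density theorem for Radon measures (the paper cites Mattila, Theorem 2.12(2), for this last step). The only substantive difference is that you build the lower bound and the boundedness into the defining set and then verify upper regularity for arbitrary centers and all radii via the two-scale argument, which is in fact more complete than the paper's one-line estimate $\mu_k(B(x,r)) \le \mu(B(x,r)) \le k r^n$, valid as written only for $x$ in the good set and $r < 2^{-k}$.
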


In Lemma \ref{l:chopping} we assume that $0 < \Theta^{n}_{*}(\mu,x)$ for $\mu$-a.e. $x \in \R^m$. This follows from the hypotheses of Theorem \ref{t:maintheorem}. In fact, if $\mu$ satisfies the hypothesis of Theorem \ref{t:maintheorem} with $\alpha = 0$, then $\mu$ is $n$-rectifiable (see \cite[Theorem 1.19]{goering2018characterizations}). Moreover, for $\alpha \in (0,1)$, $\curv^{\alpha}_{\mu;2}(x,1) < \infty$ implies $\curv^{0}_{\mu;2}(x,1) < \infty$. Now it is enough to observe that if a measure $\mu$ is $n$-rectifiable, then $\Theta^{n}_{*}(\mu,x)>0$, for $\mu$-a.e. $x \in \R^m$.

\begin{proof} 
For any positive integer $k$, let $\mu_k$ be defined by $\mu_k =\mu\res_{E_K}$, where $E_k$ is given by
\[
E_k=\{x \in \R^m \mid \mu(B(x,r)) \leq k r^n, \text{for every $r<2^{-k}$}\}.
\]
Clearly $\mu(A)=\lim_{k \to \infty} \mu(E_k \cap A)$.

Notice that $\mu_k(B(x,r))\leq \mu(B(x,r)) \leq k r^n$ so that $\mu_k$ is upper $n$-Ahlfors regular. Moreover $\Theta^n_*(\mu_k,x)>0$ for almost every $x \in E_k$, by Theorem 2.12(2) in \cite{mattila}, since $\mu_k \ll \mu$ and $\Theta^n_*(\mu,x)>0$.  
\end{proof}

\begin{proof}[Proof of Theorem \ref{t:maintheorem}]

Now, let $\mu_k$ be as in Lemma \ref{l:chopping}. Then we can apply Theorem \ref{t:ARcase} to each $\mu_k$.
Because each $\mu_k$ is $C^{1,\alpha}$ $n$-rectifiable, it follows that $\mu$ is $C^{1,\alpha}$ $n$-rectifiable.
\end{proof}



\end{document}